\author{
	Stefano Baratella\footnote{
Dipartimento di Matematica,
Universit{\`a} di Trento, Italy.
E-mail:~\textsf{stefano.baratella@unitn.it}
}
	 \and Andrea Masini\footnote{
 	Dipartimento di Informatica,
 	Universit{\`a} di Verona, Italy.
 	E-mail:~\textsf{andrea.masini@univr.it}
	 	}
}
\title{A two-dimensional metric temporal logic
\footnote{\textsf{An updated version of this paper will appear in: Mathematical Logic Quarterly}}}
\date{}
\theoremstyle{plain}
     \newtheorem{theorem}{Theorem}[section] 
     \newtheorem{corollary}[theorem]{Corollary}
\theoremstyle{definition}
\theoremstyle{remark}
     \newtheorem{remark}{Remark}[section]
\DeclareSymbolFont{FormalScript}{U}{rsfs}{m}{n}
\DeclareSymbolFontAlphabet\mathscript{FormalScript}
\mathchardef\semicolon="603B 
\mathchardef\gt="313E
\mathchardef\lt="313C
\newdimen\proofrulebreadth \proofrulebreadth=.05em
\newdimen\proofdotseparation \proofdotseparation=1.25ex
\newdimen\proofrulebaseline \proofrulebaseline=2ex
\let\then\relax
\def\hfi{\hskip0pt plus.0001fil}
\mathchardef\squigto="3A3B
\newif\ifinsideprooftree\insideprooftreefalse
\newif\ifonleftofproofrule\onleftofproofrulefalse
\newif\ifproofdots\proofdotsfalse
\newif\ifdoubleproof\doubleprooffalse
\let\wereinproofbit\relax
\newdimen\shortenproofleft
\newdimen\shortenproofright
\newdimen\proofbelowshift
\newbox\proofabove
\newbox\proofbelow
\newbox\proofrulename
\def\shiftproofbelow{\let\next\relax\afterassignment\setshiftproofbelow\dimen0 }
\def\shiftproofbelowneg{\def\next{\multiply\dimen0 by-1 }%
\afterassignment\setshiftproofbelow\dimen0 }
\def\setshiftproofbelow{\next\proofbelowshift=\dimen0 }
\def\setproofrulebreadth{\proofrulebreadth}
\def\prooftree{
%
\ifnum	\lastpenalty=1
\then	\unpenalty
\else	\onleftofproofrulefalse
\fi
%
\ifonleftofproofrule
\else	\ifinsideprooftree
	\then	\hskip.5em plus1fil
	\fi
\fi
%
\bgroup
\setbox\proofbelow=\hbox{}\setbox\proofrulename=\hbox{}%
\let\justifies\proofover\let\leadsto\proofoverdots\let\Justifies\proofoverdbl
\let\using\proofusing\let\[\prooftree
\ifinsideprooftree\let\]\endprooftree\fi
\proofdotsfalse\doubleprooffalse
\let\thickness\setproofrulebreadth
\let\shiftright\shiftproofbelow \let\shift\shiftproofbelow
\let\shiftleft\shiftproofbelowneg
\let\ifwasinsideprooftree\ifinsideprooftree
\insideprooftreetrue
%
\setbox\proofabove=\hbox\bgroup$\displaystyle 
\let\wereinproofbit\prooftree
%
\shortenproofleft=0pt \shortenproofright=0pt \proofbelowshift=0pt
%
\onleftofproofruletrue\penalty1
}
\def\eproofbit{
%
\ifx	\wereinproofbit\prooftree
\then	\ifcase	\lastpenalty
	\then	\shortenproofright=0pt	
	\or	\unpenalty\hfil		
	\or	\unpenalty\unskip	
	\else	\shortenproofright=0pt	
	\fi
\fi
%
\global\dimen0=\shortenproofleft
\global\dimen1=\shortenproofright
\global\dimen2=\proofrulebreadth
\global\dimen3=\proofbelowshift
\global\dimen4=\proofdotseparation
\global\mscount=\proofdotnumber
%
$\egroup  
%
\shortenproofleft=\dimen0
\shortenproofright=\dimen1
\proofrulebreadth=\dimen2
\proofbelowshift=\dimen3
\proofdotseparation=\dimen4
\proofdotnumber=\mscount
}
\def\proofover{
\eproofbit 
\setbox\proofbelow=\hbox\bgroup 
\let\wereinproofbit\proofover
$\displaystyle
}%
\def\proofoverdbl{
\eproofbit 
\doubleprooftrue
\setbox\proofbelow=\hbox\bgroup 
\let\wereinproofbit\proofoverdbl
$\displaystyle
}%
\def\proofoverdots{
\eproofbit 
\proofdotstrue
\setbox\proofbelow=\hbox\bgroup 
\let\wereinproofbit\proofoverdots
$\displaystyle
}%
\def\proofusing{
\eproofbit 
\setbox\proofrulename=\hbox\bgroup 
\let\wereinproofbit\proofusing
\kern0.3em$
}
\def\endprooftree{
\eproofbit 
  \dimen5 =0pt
%
\dimen0=\wd\proofabove \advance\dimen0-\shortenproofleft
\advance\dimen0-\shortenproofright
%
\dimen1=.5\dimen0 \advance\dimen1-.5\wd\proofbelow
\dimen4=\dimen1
\advance\dimen1\proofbelowshift \advance\dimen4-\proofbelowshift
%
\ifdim	\dimen1<0pt
\then	\advance\shortenproofleft\dimen1
	\advance\dimen0-\dimen1
	\dimen1=0pt
	\ifdim  \shortenproofleft<0pt
        \then   \setbox\proofabove=\hbox{%
			\kern-\shortenproofleft\unhbox\proofabove}%
                \shortenproofleft=0pt
        \fi
\fi
%
\ifdim	\dimen4<0pt
\then	\advance\shortenproofright\dimen4
	\advance\dimen0-\dimen4
	\dimen4=0pt
\fi
%
\ifdim	\shortenproofright<\wd\proofrulename
\then	\shortenproofright=\wd\proofrulename
\fi
%
\dimen2=\shortenproofleft \advance\dimen2 by\dimen1
\dimen3=\shortenproofright\advance\dimen3 by\dimen4
%
\ifproofdots
\then
	\dimen6=\shortenproofleft \advance\dimen6 .5\dimen0
	\setbox1=\vbox to\proofdotseparation{\vss\hbox{$\cdot$}\vss}
	\setbox0=\hbox{%
		\kern\dimen6
		$\vcenter to\proofdotnumber\proofdotseparation
			{\leaders\box1\vfill}$%
		\unhbox\proofrulename}%
\else	\dimen6=\fontdimen22\the\textfont2 
	\dimen7=\dimen6
	\advance\dimen6by.5\proofrulebreadth
	\advance\dimen7by-.5\proofrulebreadth
	\setbox0=\hbox{%
		\kern\shortenproofleft
		\ifdoubleproof
		\then	\hbox to\dimen0{%
			$\mathsurround0pt\mathord=\mkern-6mu%
			\cleaders\hbox{$\mkern-2mu=\mkern-2mu$}\hfill
			\mkern-6mu\mathord=$}%
		\else	\vrule height\dimen6 depth-\dimen7 width\dimen0
		\fi
		\unhbox\proofrulename}%
	\ht0=\dimen6 \dp0=-\dimen7
\fi
%
\let\doll\relax
\ifwasinsideprooftree
\then	\let\VBOX\vbox
\else	\ifmmode\else$\let\doll=$\fi
	\let\VBOX\vcenter
\fi
\VBOX	{\baselineskip\proofrulebaseline \lineskip.2ex
	\expandafter\lineskiplimit\ifproofdots0ex\else-0.6ex\fi
	\hbox	spread\dimen5	{\hfi\unhbox\proofabove\hfi}%
	\hbox{\box0}%
	\hbox	{\kern\dimen2 \box\proofbelow}}\doll%
%
\global\dimen2=\dimen2
\global\dimen3=\dimen3
\egroup 
\ifonleftofproofrule
\then	\shortenproofleft=\dimen2
\fi
\shortenproofright=\dimen3
%
\onleftofproofrulefalse
\ifinsideprooftree
\then	\hskip.5em plus 1fil \penalty2
\fi
}
\newcommand{\urule}[3]{%
	\prooftree #1 \justifies #2 \using #3 \endprooftree}
\newcommand{\brule}[4]{%
	\prooftree #1\ \ \ #2 \justifies #3 \using #4 \endprooftree}
\def\TR{\mathbb{T}}
\begin{document}
	\maketitle
	\begin{abstract}
		We introduce  a two-dimensional metric (interval) temporal logic whose internal and external time flows are dense linear orderings.  We provide a suitable semantics  and a sequent calculus with axioms for equality and extralogical axioms. Then we prove completeness  and a semantic partial cut-elimination theorem down to formulas of a certain type. 
		
	\end{abstract}

	
	\section{Introduction} 
	
	In recent years, metric temporal logics gained popularity because of their applicative aspects, for instance those relative to the formalization of time-critical systems. At the same time, under the paradigm of \textit{time granularity},  there appeared in the literature a number of proposals on how to combine temporal logics into $n$-dimensional systems (very often  with $n=2$). See, for instance, \cite{fg1,fg2,r98}.
	To the best of our knowledge,  proof-theoretic properties of those multi-dimensional systems have not been investigated yet.

	In computer science,  multi-dimensional temporal structures and the corresponding logics are often  used. For instance, there are  temporal structures modelling computations, with internal and external temporal quantifications ranging on states and  computations respectively. There are also  the structures for time granularity, which come equipped with an internal time flow (for each of the so called time grains) and an external one (for the whole structure of time grains).  
	In this paper we took inspiration from the latter structures to develop a corresponding two-dimensional metric temporal logic, where both time flows are dense linear orderings.  First of all we establish
	completeness of the  proposed system with respect to a suitable class of structures. Secondly we get   a semantic proof of cut-elimination  as a byproduct of the technique used to establish completeness. More specifically, regarding cut-elimination, we aim at a  semantic  partial cut-elimination result in the vein of \cite[Theorem 2.7.1]{ptlc}. We get our result by using  semantic techniques, as done in \cite[3.1.9]{ptlc}. We show that every provable sequent has a proof whose cut formulas are among those occurring  in some extralogical axiom.   
	
	We recall that, in presence of extralogical axioms,  a full cut-elimination result that preserves the subformula property does not hold in general. See \cite[Remark 2.7.7 ]{ptlc} for an example.

	For simplicity, in the following  we will simply refer to our result  as to cut-elimination.

	\medskip
	
	In \cite{bm} we introduced an infinitary extension of a fragment of the Metric Interval  Temporal Logic  over dense time domains, called ${\rm MTL}_\infty.$   We recall that Metric Interval  Temporal Logic was introduced in \cite{mitl} as a fragment of Metric Temporal Logic in which  
	temporal operators may ``quantify''   over nonsingular intervals only.
	More precisely, the logic 
	${\rm MTL}_\infty$  is a modification and an extension of a system proposed in \cite{mpt}.  Same as with the latter system,  only quantifications over nonsingular intervals are allowed in ${\rm MTL}_\infty$. Differently from the system in \cite{mpt}, an induction schema is provable in ${\rm MTL}_\infty,$ thanks to the presence of some infinitary rules and axioms. Moreover,  applications  of the cut rule can be restricted to very simple formulas (see \cite[Theorem 3.4]{bm}).  In ${\rm MTL}_\infty$   there are relational formulas that describe properties of the underlying time flow and labelled formulas that express temporal statements. Regarding labelled deductive systems, we refer the reader to  \cite{g96,v00}.
	
	In ${\rm MTL}_\infty,$ binary propositional connectives only apply to formulas of the same kind and the deduction rules deal separately (as much as possible) with the relational and the labelled component of the deductive system. 
	
	Apart from recovering an induction schema and retaining  a suitable cut-elimination property, a motivation for studying ${\rm MTL}_\infty$ was the remark made in   \cite{mpt}   that,  differently from the fully developed
	model checking techniques, proof-theoretic investigations of Metric Temporal Logic had  been only partly attempted until that time.
	
	Although familiarity with the  content of \cite{bm} might be helpful, we will keep this paper as much self-contained as possible. We will refer to \cite{bm}  to avoid unnecessary repetitions or  for  comments and further motivations for introducing ${\rm MTL}_\infty$. 
	
	In this paper we aim at temporalizing ${\rm MTL}_\infty.$ More precisely, but still vaguely,  we want to put  a copy of ${\rm MTL}_\infty$ on top of itself and investigate the properties of the  resulting two-dimensional system, to be called ${\rm MTL}_\infty^2.$ The semantic intuition is that there are  two kinds of time flows, an internal one and an external one, each being a dense linear ordering with least but no greatest element.  At  each point of the external flow it corresponds a copy of the internal time flow. Roughly speaking, each copy of the internal time flow and the external one will be governed by the ${\rm MTL}_\infty$ logic of \cite{bm}. Temporal operators do  quantify over (possibly unbounded) intervals in the same way as in ${\rm MTL}_\infty.$
	
	It is worth mentioning that the temporalization of certain systems has been  studied in  \cite{fg1}, where the authors  investigate the so called \textit{external} way of temporalizing a logic system. In the external approach it is not necessary to have detailed knowledge about the components of the system. We anticipate that, differently from  \cite{fg1}, we will need a detailed knowledge of the components  in order to establish cut elimination for the resulting system. 
	
	Various techniques for combining  temporal logic systems have been developed also   in \cite{fg2}. In that approach, constraints need to be specified in order to ensure that properties of the combining logics are retained by the combined system.   More precisely, in \cite{fg2} the authors investigate the transfer of soundness, completeness and decidability from the components to the system they generate. We point out that   they do not establish  any proof-theoretic property of the resulting systems. In this paper, in addition to proving soundness and completeness  of  ${\rm MTL}_\infty^2$  with respect to adequate semantics and sequent calculus,  we will also get a  semantic proof of a partial cut-elimination theorem.

	We  also point out that  a two-dimensional  system can be used to describe a temporal logic for time granularity.    For such a logic  can be regarded as a  combination of simpler  temporal logics in a way that properties of the resulting system can be derived from those of its components.  For a survey on temporal logics for time granularity, we refer the reader  to \cite{em}. 
	
	Eventually we recall that,  in the literature, one can find general methods for  obtaining a system that admits full cut-elimination, starting from a system with  axioms. They are based on the replacement of   axioms with rules.  See the method introduced in   \cite{NvP} and subsequently developed  by the same authors  in other works.  In our opinion, the technique proposed in  \cite{NvP} is a clever way of hiding the cuts within the added rules. The reader should bear in mind that, differently from our work, \cite{NvP} originates from   purely proof-theoretical motivations.  A stronger motivation for  dealing with a system which is not completely cut-free is that  application of the technique  in  \cite{NvP} to a concrete deduction system with axioms like ours  results in the replacement of  naturally formulated  axioms  (whose intuitive meaning is clearly understood) with rules which are far less intuitive.
	In our opinion, the latter phenomenon is not a point in favor of a potential usability of the resulting  system.
	
	\medskip 
	
	The rest of the paper is organized as follows.

	In Section~\ref{sinsem},  we introduce syntax and  semantics of ${\rm MTL}_\infty^2.$ 
	
	In Section~\ref{cal} we introduce the sequent calculus. We also {   point out}  that  two induction 
	schemas (one for each kind of  time flow) are provable in ${\rm MTL}_\infty^2.$
	
	In Section~\ref{compl} we establish completeness and we get  cut-elimination, by suitably modifying  the reduction tree construction technique of predicate logic. Differently from \cite{bm}, we do not make use of any first order translation of the  ${\rm MTL}_\infty^2$ temporal formulas, but we directly construct the reduction tree of a sequent by means of the ${\rm MTL}_\infty^2$ deduction rules.  
	
	Eventually, { by applying a  result  in \cite{bm}}, we get completeness of ${\rm MTL}_\infty^2$  with respect to  { the family of  structures where the  underlying time flows are given by the nonnegative rational numbers and the time intervals are determined by a very simple function.}
	
	\section{Syntax and  semantics}\label{sinsem}
	
	We recall that the logic $L_{\omega_1\omega}$ is an extension of first order logic where  countable
	conjunctions and disjunctions are allowed (see \cite{keisler}). 
	
	As is usual we  identify a language $L$ with its extra logical symbols. We write $\varphi\in L$ as an abbreviation for ``$\varphi$ is an $L$-formula''. Similarly with $L$-terms. 
	
	We denote the set of natural numbers by $\omega.$
	
	\subsection{Syntax}\label{syntax}
	
	The language of ${\rm MTL}^2_\infty$ is formed by an internal language $L_i$ and an external language $L_e.$  Each of these two languages comprises a   \textit{relational} component  $L_{*, r}.$   and a  \textit{labelled} component $L_{*,l},$ where $*\in\{i,e\}.$ We begin by introducing  the two components of  $L_i$ and the corresponding formulas.
	
	\begin{itemize}
		\item $L_{i, r}=\{ f, <, c\}$ is a first order language with equality and $f$, $<$, $c$ are a unary function, a binary relation and a constant symbol respectively. 
		We have one additional connective $\bigvee$ for (countable) infinitary disjunction.  We use lower case letters $v, w, x, y, z$ (possibly indexed) for the (meta)variables of  $L_{i, r}.$ We denote by $V_i$ the set  of $L_{i, r}$-variables.

		\medskip We denote by $T_i$ the set  of  $L_{i, r}$-terms. We use letters $s,t, u$ (possibly indexed) for the $L_{i, r}$-terms.
		The set $F_{i, r}$ of $L_{i, r}$-formulas (to be called \textit{internal relational formulas}) is the least set 
		containing:
		\begin{enumerate}
			\item[--] the finitary $L_{i, r}$-formulas;
			\item[--] all the subformulas (in the sense of the $L_{\omega_1\omega}$ logic) of
			the infinitary formulas of the form $\forall x\bigvee_{n\in\omega}(x\lt
			f^n(c)).$ Hence the set of subformulas of $\forall x\bigvee_{n\in\omega}(x\lt
			f^n(c))$ contains the formula itself, $\bigvee_{n\in\omega}(t\lt
			f^n(c))$  and $t\lt
			f^k(c)$ for all relational terms $t$ and all $k\in{\omega}.$ (Here and in the following we let $f^0(t)=t$ for all terms $t.$)
		\end{enumerate}

		\item $L_{i, t}$ is a propositional language with a set $\mathbf P=\{ p_n: n\in\omega\}$ of propositional letters; the propositional connectives and, for all  $m<n$ in ${\omega },$ the temporal operators
		$$\Box_{[m,n]}\ \quad\Box_{]m,n]}\ \quad \Box_{[m,n[}\quad \Box_{]m,n[}\ \quad
		\Box_{[m,\infty[}\ \quad  \Box_{]m,\infty[}\ .$$

		The semantics of the temporal operators will be introduced in Section~\ref{semantics}. 
		We write $\Box_{(m,n)}$ to denote any of the above  operators, where $m\in\omega$ and $n\in\omega\cup\{\infty\}.$
		
		\medskip
		
		The set $F_{i, t}$ of $L_{i, t}$-formulas (to be called \textit{internal temporal formulas}) is the least set $A$  that contains the proposition symbols and is closed under application of the propositional connectives  and under the following formation rules: if $\alpha\in A $ then 
		$\Box_{(m,n)}\alpha$ is in $A,$ where $\Box_{(m,n)}$ is any of the temporal operators.
		
	\end{itemize}

	The set $F_{i, l}$ of \textit{internal labelled formulas} is the set of all expressions of the form $s: \psi,$ where $s\in T_i$ and $\psi\in F_{i, t}.$ 
	
	\medskip
	
	What we have introduced so far is basically the syntax of ${\rm MTL}_\infty.$ Logic ${\rm MTL}_\infty$ has two kinds of formulas:  the relational  and the labelled ones.  To sum up: the  formulas in $F_{i,t}$ are those of the propositional metric temporal logic  underlying ${\rm MTL}_\infty.$  A formula $\alpha\in F_{i,t}$  can be decorated by an internal label $s$ (an element of $T_i$) to form the internal labelled formula $s:\alpha.$ Once $s:\alpha$ has been formed, it cannot be used to construct a more complicated formula. By this we mean that, for instance, the conjunction of two internal labelled formulas is not an internal labelled formula, but it will be a formula at the second level of the temporalization, as we explain below.
	
	As we said already,  ${\rm MTL}^2_\infty$ is obtained by putting  a copy of ${\rm MTL}_\infty$ on top ${\rm MTL}_\infty$ itself. Therefore we define $L_{e, r}, T_e, F_{e,r}, $ (``e'' for ``external'') as above, by using the same symbols, this time  in upper case, for the extralogical symbols and  the variables. There will be no ambiguity in using $<$ for both the internal and the external predicate symbols. 
	
	Let $\overline F_{i,l}$ be the the set obtained by closing  $F_{i,l}$ under application of (external) propositional connectives and temporal operators.   The intuition is that  the formulas in $F_{i,l}$ play the role of the atomic temporal formulas for the external logic and $\overline F_{i,l}$ is thus the set of all the external temporal formulas.  The set $F_{e, l}$ of \textit{external labelled formulas} is the set of all expressions of the form $S: \beta,$ where $S\in T_e$ and $\beta\in \overline F_{i,l}\cup F_{i,r}.$  The definition of $F_{e,l}$ agrees with the spirit of ${\rm MTL}_\infty,$ where  the relational and the labelled formulas are not ``mixed''.  We will use upper case letters $A, B, \dots$ (possibly indexed)  to denote formulas in $F_{e,r}\cup F_{e,l}.$  We say that a formula  $S:\beta$ in $F_{e,l}$ is atomic if the formula $\beta$ is atomic.

	Here is a simple example: let $S$ be an external term and  $t_1:\alpha_1,  t_2:\alpha_2$ be internal labelled formulas. As we said above, the expression $(t_1:\alpha_1)\land (t_2:\alpha_2)$ is not an internal labelled formula, but $S: (t_1:\alpha_1)\land (t_2:\alpha_2)$ is an external labelled one. Even if we have not  defined the semantics yet,  to help the reader's intuition we anticipate that the latter formula will be true in a structure  at the external time instant ``$S$'' if, in the internal time flow corresponding to ``$S$'', formulas $\alpha_1$ and $\alpha_2$ are true at time instants ``$t_1$'' and ``$t_2$'' respectively.
	
	In the following we will deal with \emph{sequents}. As is usual, a  sequent is a formal expression of the form   $A_1,\dots,A_m \vdash B_1,\dots,B_n,$ for some $m,n\in\omega,$ where  $A_1,\dots, A_m$ and  $B_1,\dots,B_n$ are lists of formulas in $F_{e,r}\cup F_{e,l}.$ 
	
	\subsection{Semantics}\label{semantics}
	
	The notation introduced in section~\ref{syntax} is in force. In the following we use  standard model-theoretic notation (see, for instance, \cite{keisler}).
	
	A \textit{pre-structure} is a  a tuple of the form $(\mathbf  M, (\mathbf  N_a)_{a\in A}))$  such that:
	
	\begin{itemize}
		\item[-] $\mathbf  M=(M , < , 0, g: M\rightarrow M),$ where  $(M , < , 0)$ a dense linear ordering with least element 0 but no greatest element and $g$ a strictly increasing function such  that, for all $a\in M,$ $a<g(a)$ and the sequence $\{g^n(0):  n\in\omega\}$ is cofinal in $M$  (namely, for all $a\in M$ there exists $n\in\omega$ such that $a<g^n(0)$); 
		\item[-] for each $a\in M,$ $\mathbf  N_a=(N_a , <_a , 0_a, g_a: N_a\rightarrow N_a)$  is a first-order structure that satisfies   analogous properties to those of  $\mathbf  M.$ 
	\end{itemize}
	For notational simplicity, we will write $(\mathbf  M, \mathbf  N_a)_{a\in A}$ for $(\mathbf  M, (\mathbf  N_a)_{a\in A})).$
	
	In order to assign a truth value to a formula (internal or external) we fix 
	\begin{enumerate}
		\item[-] an assignment $\sigma: V_e \rightarrow M,$ {  where we denote by} $V_e$  the set of $L_{e,r}$-variables;
		\item[-] a family of assignments $\{\sigma_a: V_i \rightarrow N_a\}_{a\in M}, $ where {  where we denote by} $V_i$  the set of $L_{i,r}$-variables;
		\item[-] a family $\{p^{\mathbf  N_a}: p\in \mathbf P\}$ of subsets of $N_a,$ for each $a\in M.$ The set $p^{\mathbf  N_a}$ is intended to be the set of points in $N_a$ where the propositional letter $p$ is true.
		
	\end{enumerate}

	We write $t^{\mathbf  N_a,\sigma_a}$ to denote the interpretation of  $t\in T_i$  in $\mathbf  N_a$ under  $\sigma_a.$  Let $b\in N_a.$ We denote by $\sigma_a(x/b)$ the assignment that differs from $\sigma_a$ at most on the variable $x,$ on which it takes value $b.$

	The truth value of  an $L_{i,r}$-formula in $\mathbf  N_a$ under $\sigma_a$ is given by its first-order semantics. 
	
	Let $t\in T_i$ and let  $\psi\in F_{i,t}.$   We fix $a\in M.$  Let   $\overline b=t^{\mathbf  N_a,\sigma_a}.$  We recursively define  $\mathbf  N_a,\sigma_a,(p^{\mathbf  N_a})_{p\in\mathbf P}\models t:\psi$ as follows.
	
	\begin{enumerate}
		\item[-] $\mathbf  N_a,\sigma_a,(p^{\mathbf  N_a})_{p\in\mathbf P}\models t: p$\  if\  $\overline b\in p^{\mathbf  N_a};$
		\item[-] the cases relative to the propositional connectives are treated as expected;
		\item[-] $\mathbf  N_a,\sigma_a,(p^{\mathbf  N_a})_{p\in\mathbf P}\models t: \Box_{[m,n]}\psi$ if,  for all $d\in N_a$ such that $(g_a)^m(\overline b)\le_a d\le_a (g_a)^n(\overline b),$ it holds that ${\mathbf  N_a},\sigma_a(x/d),(p^{\mathbf  N_a})_{p\in\mathbf P}\models x: \psi.$

		\item[-] $\mathbf  N_a,\sigma_a,(p^{\mathbf  N_a})_{p\in\mathbf P}\models t:\Box_{]m,\infty[}\alpha$ if, for all $d\in N_a$ such that $g_a^m(\overline b)<_a d,$ it holds that ${\mathbf  N_a},\sigma_a(x/d),(p^{\mathbf  N_a})_{p\in\mathbf P}\models x: \psi.$
		
		\noindent (The  cases { relative to the other temporal operators} are similar.)

		\end {enumerate}
		
		We let $\mathcal M =(\mathbf  M, \mathbf  N_a, (p^{\mathbf  N_a})_{p\in\mathbf P},\sigma, \sigma_a)_{a\in M}$ and, with a little abuse of the model-theoretic terminology,  we call $\mathcal M$  a \textit{structure}. We say that $\mathcal M$ is based on the pre-structure $(\mathbf  M, \mathbf  N_a)_{a\in M}.$
		
		As with the internal relational formulas, the truth value of an $L_{e,r}$-formula $\alpha$ in $\mathbf  M$ under $\sigma$ is given by by its first-order semantics. Relative to such   an $\alpha,$ we will often write $\mathcal M\models \alpha$ instead of  $\mathbf  M, \sigma\models \alpha,$ since there is no ambiguity.
		
		Relative to $\mathbf M, \sigma$ we will use the same notation previously introduced for $\mathbf N_a,\sigma_a,$ $a\in M.$ Let $T\in T_e$ and  let  $\overline a\in M$  be the interpretation $T^{\mathbf  M, \sigma}$ of $T$ in $\mathbf M$ under $\sigma.$

		If  $\varphi\in F_{i,r},$  then $\mathcal M\models T:\varphi$
		if $\mathbf  N_{\overline a},\sigma_{\overline a}\models\varphi.$  
		
		Next we say when  $\mathcal M\models T: \varphi, $ for  $\varphi\in\overline F_{i,l}.$ Basically we will repeat (with the necessary changes) the definition that we gave in the case of the  internal formulas. We provide some  details for sake of clarity. Recall that the ``atomic'' formulas in $\overline F_{i,l}$ are those of the form $t: \psi,$ where $t\in T_i$ and $\psi\in F_{i,t}.$
		\begin{itemize}
			\item[-] $\mathcal M\models T: t: \psi$  if $\mathbf  N_{\overline a}\models t: \psi;$
			\item[-] the propositional cases are treated as expected;
			\item[-] $\mathcal M\models T: \Box_{[m,n]}\eta$ if,  for all $d\in M$ such that $g^m(\overline a)\le d\le g^n(\overline a),$ it holds that $(\mathbf  M, \mathbf  N_a,(p^{\mathbf  N_a})_{p\in\mathbf P},\sigma(X/d), \sigma_a)_{a\in M}\models X: \eta.$
			
			\noindent (The other cases are similar.)
			
		\end{itemize}
		
		We say that a  sequent $\Gamma\vdash\Delta$    is true, or satisfied,  in  a structure $\mathcal M$ (and we write $\mathcal M\models \Gamma\vdash\Delta$)  if  $\mathcal M$ does not satisfy some of the formulas in  $\Gamma$ or satisfies  some of the formulas in $\Delta.$ 
		A sequent is \textit{valid}  if it is true in all structures.

		\section{Sequent calculus}\label{cal}

		In this section we introduce the  axioms and the rules   of  $\mbox{MTL}_\infty^2.$  Most of the axioms are formulated as Post rules.
		
		In the following we first introduce some axiom schemas, then we will consider instances of those schemas. 
		We assume that the reader can easily figure out what we mean by an instance of some axiom schema.  For instance, for all $S_1, S_2$ in $T_e,$ the sequent  $S_1=S_2, \rho[S_1/Z]\vdash \rho[S_2/Z]$ is an instance of  the axiom schema $X=Y, \rho[X/Z]\vdash \rho[Y/Z]$.
		
		The reason for dealing with instances rather than with schemas is to obtain a suitable cut-elimination theorem. The reader who is not interested in proof-theoretic issues may opt for axiom schemas.

		\subsection*{Axioms for equality}
		
		Among the  axioms for equality are all instances of the following axiom schemas. 
		
		$$
		\vdash X=X; \hspace{1cm} X=Y, \rho[X/Z]\vdash \rho[Y/Z] \quad (\rho \mbox{\ an atomic formula in\ } F_{e,r}) 
		$$
		$$
		\vdash T: x=x; \hspace{1cm} T: x=y, T: \rho[x/z]\vdash T: \rho[y/z] 
		$$
		\begin{flushright}
			$(T\in T_e,\  \rho \mbox{\ an atomic formula in\ }  F_{i,r})$
		\end{flushright}

		In addition, we have the following equality  axioms:
		
		$$ T=S,\  T:\eta \vdash S: \eta \qquad \mbox{($\eta$ an atomic formula in  $\overline F_{i,l}\cup F_{i,r}$} ) $$

		As it can be immediately realized, the axiom schemas  on the second line are obtained from  those on the first one by using internal variables/symbols and by ``decorating'' with a same external label all the formulas involved. Loosely speaking, we will say  that the latter schemas  are the \textit{internal} formulations of the former.  Later we will adopt the same terminology relative to the rules.
		
		\subsection*{Extralogical axioms}
		
		With the extralogical axioms we axiomatize the class of structures under consideration (see Section~\ref{semantics}).
		
		We start with  external extralogical axiom schemas stating that $<$ is a dense linear ordering with least element $C$:
		
		$$X<X\vdash\hspace{3mm} ;\quad X<Y, Y<Z\vdash X<Z;\quad \vdash X<Y, X=Y, Y<X $$   
		$$\vdash C< X, C=X;\quad X<Y\vdash \exists Z (X<Z\land Z<Y) $$ 
		
		We do not include an axiom stating that there is no greatest element, because the latter property can be derived from the leftmost axiom of the following list.

		The external extralogical axioms  also express  the properties of the function symbol $F$ and of the sequence $\{F^n(C): n\in\omega\}$:
		$$\vdash X< F(X); \quad X<Y \vdash F(X)< F(Y); \quad \vdash\bigvee_{n\in\omega}(X<F^n(C))$$

		The external extralogical axioms are all instances of the above schemas.
		
		As with the axioms for equality, we have internal versions of all the external extralogical axioms.  We leave to the reader the easy task of formulating them.
		
		\medskip
		
		In the following we refer to \cite[Chapter 2]{ptlc} for a formulation of the rules of first-order sequent calculus.

		\subsection*{Identity axiom and cut  rule}
		
		They  are:$$
		A\vdash A\quad \textit{id} \hspace{2cm}
		\brule %
		{\Gamma\vdash A,\Delta} %
		{ \Gamma,A \vdash \Delta} %
		{\Gamma\vdash \Delta} %
		{\quad \textit{cut} }
		$$%
		respectively. They apply to   any formula $A\in F_{e,r}\cup  F_{e,l}. $

		\subsection*{Structural rules}
		
		The structural rules are those of  \textit{Weakening, Exchange} and \textit{Contraction} from sequent calculus. They apply to any formula in $F_{e,r}\cup  F_{e,l}.$

		\subsection*{Rules for relational formulas}
		The rules for the external relational formulas are those of the  first-order   sequent calculus for the finitary connectives and the
		quantifiers. In addition, we have the following  rules for the  infinitary disjunction:

		\begin{center}
			$\urule %
			{\{\Gamma,T\lt F^n(C)\vdash\Delta\}_{n\in\omega} } %
			{\Gamma,\bigvee_{n\in\omega}(T\lt  F^n(C))\vdash\Delta } %
			{} $ %
			\qquad
			$\urule %
			{\Gamma\vdash T\lt F^m(C), \Delta}%
			{\Gamma\vdash\bigvee_{n\in\omega}(T\lt F^n(C)),\Delta } %
			{\quad\mbox{for all  $m\in\omega.$}} $ %
		\end{center}
		
		We also have internal versions of all the rules above for the labelled external formulas of the form $T: \rho,$ where $T\in T_e$ and $\rho\in F_{i,r}.$ For sake of clarity, we formulate the rules concerning  the infinitary disjunction:

		\begin{center}
			$\urule %
			{\{\Gamma,T:t\lt f^n(c)\vdash\Delta\}_{n\in\omega} } %
			{\Gamma,T:\bigvee_{n\in\omega}(t\lt  f^n(c))\vdash\Delta } %
			{} $ %
			\qquad
			$\urule %
			{\Gamma\vdash T:t\lt f^m(c), \Delta}%
			{\Gamma\vdash T:\bigvee_{n\in\omega}(t\lt f^n(c)),\Delta } %
			{\quad\mbox{for all  $m\in\omega.$}} $ %
		\end{center}

		\subsubsection*{Propositional rules }
		
		The propositional rules for the external labelled formulas closely follow the standard propositional
		rules.   As an example, we give the the rules for implication:
		
		%
		%
		
		\begin{center}
			
			$\urule %
			{\Gamma, T:\alpha\vdash T:\beta, \Delta}%
			{\Gamma\vdash T:\alpha\rightarrow \beta, \Delta } %
			{} $ %
			\hspace{ 3mm}
			$\urule %
			{\Gamma,T: \beta\vdash \Delta\qquad\Gamma\vdash T:\alpha,\Delta}%
			{\Gamma, T:\alpha\rightarrow \beta\vdash \Delta } %
			{} 
			\hspace{2mm}
			\ (\alpha,\beta\in \overline F_{i,l} \mbox{\ or\ } \alpha,\beta\in F_{i,r})$
		\end{center}
		
		Notice the constraint on the application of the two rules above, which is a consequence of the constraint that we put on the formation of the set $F_{e,l}$ of external labelled formulas. As we already said, we do not want to ``mix up'' the relational and temporal components of the system.
		
		\medskip
		
		We also have internal versions of all the propositional rules for the external labelled formulas. For instance, the internal versions of the last two rules are the following: 
		
		\begin{center}
			
			$\urule %
			{\Gamma, T:t:\eta\vdash T:t:\xi, \Delta}%
			{\Gamma\vdash T:t:\eta\rightarrow\xi, \Delta } %
			{} $ %
			\qquad$\urule %
			{\Gamma,T: t:\xi\vdash \Delta\qquad\Gamma\vdash T:t:\eta,\Delta}%
			{\Gamma, T: t:\eta\rightarrow\xi\vdash \Delta } %
			{}  
			\quad(\eta,\xi\in F_{i,t})$
			
		\end{center}

		\subsection*{Temporal rules}
		The rules for temporal operators are essentially the same as  in \cite{bm}.
		As above, we have a ``duplication'' of the rules to take into account the internal and the external level of the deductive system. For instance, the rules governing $\Box_{[m,n]}$ are: 
		\begin{center}
			$\urule %
			{\Gamma, F^m(T)\le X, X\le F^n(T)\vdash X:\alpha,\Delta}%
			{\Gamma\vdash  T:\Box_{[m,n]}\alpha,\Delta } %
			{\qquad\mbox{($\alpha\in\overline F_{i,l};$ $X$ not free in $\Gamma\cup\Delta\cup \{ T\}$) }}
			$ %
			
			\bigskip 
			
			$\urule %
			{\Gamma\vdash F^m(T)\le S, \Delta \quad \Gamma\vdash S\le F^n(T), \Delta\quad
				\Gamma, S:\alpha\vdash\Delta}%
			{\Gamma,  T:\Box_{[m,n]} \alpha\vdash\Delta } %
			{}  
			\qquad (\alpha\in\overline F_{i,l})$ %
			
		\end{center}

		\begin{center}
			$\urule %
			{\Gamma, T: f^m(t)\le x, T:x\le f^n(t)\vdash T:x:\eta,\Delta}%
			{\Gamma\vdash  T:t:\Box_{[m,n]}\mathfrak{\eta},\Delta } %
			{\quad\mbox{($\eta\in F_{i,t};$ $x$ not free in $\Gamma\cup\Delta\cup\{t\}$}) } $ %
			
			\bigskip 
			
			$\urule %
			{\Gamma\vdash T: f^m(t)\le s, \Delta \quad \Gamma\vdash T: s\le f^n(t), \Delta \quad
				\Gamma, T: s:\eta\vdash\Delta}%
			{\Gamma, T: t:\Box_{[m,n]}\eta\vdash\Delta } %
			{}  
			\qquad(\eta\in F_{i,t})$ %
			
		\end{center}

		%
		%
		%
		%
		%
		%
		
		The rules for the other temporal operators are similar.  They can be easily derived from the  rules above.

		Proofs are standardly defined.   Notice that,
		although each branch in a proof  is finite, the proof itself may  have infinite
		height because of  the infinitary rules.
		
		\medskip
		
		It may help the reader's intuition to keep in mind that,  from the point of view of the internal logic, the atomic temporal formulas are those  in the set $\mathbf P$ of propositional letters.  The latter  set gets closed under (internal) boolean connectives and (internal) modal operators   to form the set $F_{i,t}$ of internal temporal formulas.  
		
		From the point of view of the external logic, the formulas  in the set $F_{i,l}$ of internal labelled formulas play the role of atomic temporal formulas.   By taking  the closure $\overline  F_{i,l}$ of $F_{i,l}$ under (external) boolean connectives and (external) modal operators we get   the external temporal formulas.
		
		Forgetting about  the labels, the deduction rules governing  $F_{i,t}$ are those of propositional sequent calculus and those for the (internal) temporal operators. Similarly, the deduction rules governing $\overline  F_{i,l}$ are those of propositional sequent calculus and those for the (external) temporal operators. Basically, we have two copies of the same sequent calculus, one for each level.   
		
		Each  relational component is governed by the deduction rules of the first-order sequent calculus, plus   the corresponding infinitary rule for disjunction.
		
		As is usual, we denote by $\vdash$ the provability relation in $\mbox{MTL}_\infty^2.$
		
		\medskip

		\medskip
		
		We end this section by  remarking that  two induction schemas are provable in $\mbox{MTL}_\infty^2.$  More precisely, let  $T\in T_e$ and $\alpha\in\overline F_{i,l}.$  Then

		$$\vdash
		T:(\alpha\land\Box_{[0,\infty[}(\alpha\rightarrow\Box_{[0,1]}\alpha))\rightarrow\Box_{[0,\infty[}\alpha\qquad (1)$$ 
		
		is provable in $\rm{MTL}_\infty^2.$ Up to  typographic changes, the proof is the same as the induction schema  in \cite{bm}. In a similar  way, for every  $T\in T_e, t\in T_i$ and $\eta\in F_{i,t},$  we have 
		
		$$\vdash
		T: t:(\eta\land\Box_{[0,\infty[}(\eta\rightarrow\Box_{[0,1]}\eta))\rightarrow\Box_{[0,\infty[}\eta\qquad (2)$$ 
		
		We refer the reader to \cite{bm} for a remark on the unprovability of (1) when the   infinitary axiom $\vdash\bigvee_{n\in\omega}(X<F^n(C))$ gets removed. The example given therein can be easily adapted to show the unprovability of (2) without the axiom $\vdash T: \bigvee_{n\in\omega}(x<f^n(c)).$

		\section{Underlying unlabelled systems} 
		
		Some labelled logics are built on top of unlabelled systems. Even if we have deliberately chosen not to describe in full detail a possible unlabelled system underlying $\mbox{MTL}_\infty^2,$ in this subsection we provide some insight about the semantics of the latter. We are reluctant to call the resulting object a logic. For it should be clear that, for the purposes of this paper, a logic is made of two components: a semantic and a deductive one. Indeed the reader can easily convince himself that  deduction rules  for the temporal operators corresponding to those introduced in Section~\ref{cal} cannot be formulated in absence of labels, even in case of the one-dimensional system  $\mbox{MTL}_\infty$ introduced in \cite{bm}.
		
		For sake of simplicity, let us  consider  $\mbox{MTL}_\infty.$ The  formation rules for the underlying unlabelled formulas are those of propositional first-order logic expanded with those for the temporal operators $\Box_{(m,n)},$ with $m\in\omega$ and $n\in\omega\cup\{\infty\}.$  More precisely: if $\alpha$ is a formula then so is $\Box_{(m,n)}\alpha,$ for all possible choices of the operator $\Box_{(m,n)}.$ 
		
		The pre-structures which are adequate to interpret those unlabelled formulas are of the form 
		$\mathbf  M=(M , < , 0, g: M\rightarrow M),$  where  $(M , < , 0)$ is a dense linear ordering with least element 0 but no greatest element and $g$ a strictly increasing function such  that, for all $a\in M,$ $a<g(a)$ and the sequence $\{g^n(0):  n\in\omega\}$ is cofinal in $M.$  
		In order to get a structure we must provide  a family $\{ p^{\mathbf M}: p\in \mathbf P\}$ of subsets of $M$. We let  
		\begin{equation}\mathcal M=(\mathbf M, (p^{\mathbf M})_{p\in\mathbf P})\label{one}
		\end{equation}
		
		The semantics is Kripke-like. Let $\mathcal M$ be as in (\ref{one}) and  $m\in M.$ The definition of $\mathcal M,m\models \alpha$ is by induction on $\alpha.$ We skip the the details (see Section~\ref{sinsem} or \cite{bm} for insights). 
		
		We stipulate that an unlabelled formula $\alpha$ is \textit{valid} if, for all $\mathcal M$ as in (\ref{one}) and all $m\in M,$ $\mathcal M, m\models\alpha.$ 
		
		We make the trivial observation  that, for $\mathcal M$ as (\ref{one}) and $m\in M,$ by letting $M_m=\{ x\in M: m\le x\}$  and by denoting with the same names the restrictions of  by $<$ and $g$  to $M_m$ respectively,  the tuple $\mathbf M_m=(M_m, <, m, g)$  together with the family 
		$\{p^{\mathbf M}\cap M_m: p\in\mathbf P\}$ forms a structure as in (\ref{one}). It follows that  validity of $\alpha$ is equivalent to the following:  $$\mbox{for all $\mathcal M$ as in (\ref{one}),}\hspace{7mm} \mathcal M, 0\models \alpha.$$ 
		
		Eventually, we notice that the latter is equivalent to the validity of the labelled formula $c:\alpha$ with respect to the  $\mbox{MTL}_\infty$ semantics. By soundness and completeness of $\mbox{MTL}_\infty$  (see \cite[Theorem 3.4]{bm}), validity of the unlabelled formula $\alpha$ is equivalent to the provability in $\mbox{MTL}_\infty$ of the labelled sequent  $\vdash c:\alpha.$
		
		Therefore we may use the $\mbox{MTL}_\infty$  calculus to investigate validity in the underlying unlabelled system. Admittedly, this  is of limited interest because of the above mentioned fact that the unlabelled system  lacks deduction rules for its temporal operators. As such, it is not a logic whose validity or provability relation we may want to investigate within another logic.
		
		In light of soundness and completeness  (see Corollary~\ref{completeness}), we may proceed as above in the case of $\mbox{MTL}_\infty^2$ In this latter case, the underlying system should  have two families of temporal operators $\Box^i_{(m,n)}$ and $\Box^e_{(m,n)},$ with $m\in\omega$ and $n\in\omega\cup\{\infty\}$ and with the obvious intuitive meaning of the superscripts \textit{i} and \textit{e}. We should also impose, among others,  syntactic restrictions on the formation rules of formulas. For instance, no  $\Box^e_{(m,n)}$ can occur on the right-hand side of a $\Box^i_{(m,n)}$ operator. The structures which are adequate to interpret the resulting class of formulas can be easily  defined, by closely following Section~\ref{semantics}.
		
		As in the case of $\mbox{MTL}_\infty,$ it is possibile to relate validity in the unlabelled system to validity  with respect to the
		$\mbox{MTL}_\infty^2$ semantics. We skip the details and we provide a simple example:  validity of the unlabelled formula $\Box^e_{[0,1]}\Box^i_{[0,1]}p$ turns out to be equivalent to the validity  of the sequent $\vdash C: \Box_{[0,1]} c: \Box_{[0,1]}p.$ 
		
		The same considerations as in the one-dimensional case apply. Unfortunately, even in this case the unlabelled system   lacks deduction rules for the temporal operators.

		\section{Reduction tree construction and completeness}\label{compl}
		
		At the end of Section~\ref{semantics} we already said when a sequent is valid. It is a lengthy but easy task to verify that all the axioms are valid and that every provable sequence is valid. To prove the latter, proceed by induction on a derivation of a provable sequent.
		
		We aim at proving that $\rm{MTL}_\infty^2$ is complete with respect to the class of structures that we introduced in Section~\ref{semantics}.
		
		\subsection{The reduction tree}\label{redtree} 
		
		
		\newcommand{\Urule}[3]{%
			\prooftree #1 \Justifies #2 \using #3 \endprooftree}

		\newcommand{\Exp}[2]{%
			#1\uparrow #2
		}
		
		\newcommand{\rExp}[2]{%
			#1\Uparrow #2
		}

		In this section we suitably modify the reduction tree construction technique of  first order logic  (see, for instance, Theorem 3.1.9, Remarks 3.1.11 and 3.1.12 in \cite[\S 3.1]{ptlc}). Our aim is to get either a proof in $\rm{MTL}^2_\infty$ of a sequent $\Gamma\vdash\Delta$ or a tree with an infinite branch whose root is labelled $\Gamma\vdash\Delta.$    In Section~\ref{countermodel} we will derive the unprovability  of  $\Gamma\vdash\Delta$ from the existence of such an infinite branch.
		
		For simplicity we identify a node in a tree with its associated label.

		We fix a sequent $\Gamma\vdash\Delta.$ We also fix 
		\begin{enumerate}
			\item an ordering $A_1,\ldots,A_m$ of all the formulas in $\Gamma\vdash\Delta$  (recall that all the $A_i$'s belong to $F_{e,r}\cup F_{e,l}$);
			\item an enumeration   of all  axioms different from \textit{id}, in which each axiom is repeated infinitely many times;
			\item an enumeration  ${\{{t}_i}\}_{i<\omega}$  of the set $T_i$ of $L_{i,r}$-terms;
			\item an enumeration  ${\{{S}_i}\}_{i<\omega}$  of the set $T_e$ of $L_{e,r}$-terms.
		\end{enumerate}
		
		Next we  recursively define a sequence $\{\TR_n\}_{n<\omega}$  of well founded trees.  We say that a branch in $\TR_n$ is \emph{closed} if its leaf  is an axiom  (of any kind), otherwise it is called \emph{open}.  It is intended that the following construction only applies to the open branches in a tree. Therefore, if for some $n$ all  branches in $\TR_n$ are closed, then $\TR_n=\TR_m$ for all  $n<m.$
		
		\begin{description}
			\item[basis] $\TR_0$ is $\Gamma\vdash\Delta$.
			\item[recursion] Assume that the trees $\TR_m,$ $m\le n,$ have been already defined in a way that, for all $h<k\le m,$  $\TR_k$ is an upwards extension of $\TR_h.$  If all the branches in $\TR_n$ are closed, we stop. If not,   we define  $\TR_{n+1}$ by  extending each of the open branches in $\TR_n$ upwards. With each open branch we proceed as follows.
			
			
			Let  $\Gamma^n\vdash\Delta^n$  be the leaf of an open branch in $\TR_n.$ \ The formulas in $\Gamma^n\vdash\Delta^n$ come equipped with an ordering $A_1, \ldots,A_q$ that has been determined by the previous steps of the construction (see cases 1--3 below).

			We examine various cases, depending on the first formula $A_1$  in the ordering.
			
			In the  following, we write $
			\Urule{S_1}{S_2}{}$ if the sequent $S_2$ can be proved from the sequent $S_1$ by  application of finitely many structural rules.

			
			\begin{enumerate}
				\item\label{1one}  $\Gamma^n\vdash\Delta^n$ is provable by means of structural rules only from some   axiom  (of any kind), say from $ \Gamma'\vdash\Delta'$. In this case we extend $\Gamma^n\vdash\Delta^n$ upwards with the corresponding bit of derivation. 
				
				$$
				\Urule{\Gamma'\vdash\Delta'}{\Gamma^n\vdash\Delta^n}{}.
				$$

				Notice that the branch so obtained  is closed. Therefore it will not be further extended.
				\item\label{extra-logical-expansion} Case~\ref{1one} does not happen  and $A_1$ is atomic.

				In this case we examine the  $n$-th element  in the enumeration of the axioms. Let $B_1,\ldots,B_k\vdash D_1,\ldots,D_m$ be such element.  
				
				If  $B_1,\ldots,B_k$  all occur in $\Gamma^n,$  we   extend $\Gamma^n\vdash\Delta^n$ upwards as follows:
				
				\def\unobis{
					\ded{B_1,\ldots,B_k\vdash D_1,\ldots,D_m}
					{\Gamma^n\vdash D_1,\ldots,D_m, \Delta^n}
					{\qquad\mbox{\scriptsize{structural rules}}}
				}
				
				\def\uno{
					\Urule{B_1,\ldots,B_k\vdash D_1,\ldots,D_m}
					{
						{\Gamma^n\vdash D_1,\ldots,D_m, \Delta^n }
					}
					{}
				}
				
				\def\due{
					\urule{\uno  \Gamma^n, D_1\vdash\Delta^n}
					{\Gamma^n\vdash D_2,\ldots,D_m, \Delta^n} 
					{cut}
				}
				\def\tre{
					\urule{\due  \Gamma^n, D_2\vdash\Delta^n}
					{\vdots \ cuts}
					{{cut}}
				}
				\[
				\urule{\tre \hspace{-3ex} \Gamma^n, D_m\vdash\Delta^n}
				{\Gamma^n\vdash\Delta^n}
				{cut}
				\]
				For each $1\le i\le m,$ the  ordering of the formulas in the leaf $\Gamma^n, D_i\vdash\Delta^n$ is  $A_2,\ldots,A_q, D_i, A_1.$ 
				
				If  some among $B_1,\ldots,B_k$  does not occur in $\Gamma^n,$  we do not add any node to the current  branch and we re-order the formulas in $\Gamma^n\vdash\Delta^n$ as follows:   $A_2,\dots, A_q, A_1.$

				\item\label{two} Case~\ref{1one} does not happen and $A_1$ is not atomic. We consider several cases and subcases (one for each logical rule of the calculus).  In the following we deal with few cases only.  We invite the reader to work out the missing cases.
				\begin{enumerate}
					\item $A_1$ occurs in $\Gamma^n.$
					\begin{enumerate}
						\item[(a1)] $A_1$ is   $T: t:\eta\rightarrow\xi.$ We extend  $\Gamma^n\vdash\Delta^n$ upwards as follows:
						\[
						\Urule %
						{\Gamma^n,T: t:\xi\vdash \Delta^n\qquad\Gamma^n\vdash T:t:\eta,\Delta^n}%
						{\Gamma^n\vdash \Delta^n } %
						{}   
						\]
						The new orderings of the formulas in the left  and the right  leaf are   $A_2,\ldots,A_q,T: t:\xi, A_1$ and $A_2,\ldots,A_q,T: t:\eta, A_1$ respectively.
						\item[(a2)] $A_1$ is   $T: t:\Box_{[i,j]}\eta.$ Let $s$ be the first term in the enumeration ${\{{t}_i}\}_{i<\omega}$ that has not been  used in the current branch in a previous application of this case to the formula $A_1$.
						We extend
						$\Gamma^n\vdash\Delta^n$ upwards as follows
						\[
						\Urule %
						{\Gamma^n\vdash T: f^i(t)\le s,\Delta^n\quad \Gamma^n\vdash T: s\le f^j(t),\Delta^n\quad
							\Gamma^n, T: s:\eta\vdash\Delta^n}%
						{\Gamma^n\vdash\Delta^n} %
						{}  
						\]
						The  orderings  of the formulas in the new leaves are (from left to right): $$A_2,\ldots,A_q, T: f^i(t)\le s, A_1\hspace{1cm}
						A_2,\ldots,A_q,T: s\le f^j(t), A_1$$ and 
						$A_2,\ldots,A_q,T: s:\eta, A_1.$
						\item[(a3)] 
						$A_1$  is  $T:\bigvee_{i\in\omega}(t\lt  f^i(c)).$ We 
						extend $\Gamma^n\vdash\Delta^n$ upwards as follows
						\[
						\Urule %
						{\{\Gamma^n,T:t\lt f^i(c)\vdash\Delta^n\}_{i\in\omega} } %
						{\Gamma^n\vdash\Delta^n } %
						{} 
						\]
						The  ordering of the formulas in the leaf $\Gamma^n,T:t\lt f^i(c)\vdash\Delta^n$ is  $A_2,\ldots,A_q, T:t\lt f^i(c), A_1, $ for all $i\in\omega.$\end{enumerate}
					\item $A_1$ occurs in $\Delta^n.$
					\begin{enumerate}
						\item[(b1)] if $A_1$ is  $T: t:\eta\rightarrow\xi. $  We 
						extend $\Gamma^n\vdash\Delta^n$ upwards as follows
						
						\[
						\Urule %
						{\Gamma^n, T:t:\eta\vdash T:t:\xi, \Delta^n}%
						{\Gamma^n\vdash \Delta^n } %
						{} 
						\]
						The  ordering of the formulas in the new leaf  is:\\ 
						$A_2,\ldots,A_q,T: t:\eta, T: t:\xi,A_1$
						\item[(b2)] $A_1$ is $ T:t:\Box_{[i,j]}\mathfrak{\eta}.$  We pick an internal variable $x$ that does not occur free in $\Gamma^n\cup\Delta^n\cup\{t\}$ and we 
						extend $\Gamma^n\vdash\Delta^n$ upwards as follows
						\[
						\Urule %
						{\Gamma^n, T: f^i(t)\le x, T:x\le f^j(t)\vdash T:x:\eta,\Delta^n}%
						{\Gamma^n\vdash\Delta^n }
						{} 
						\] %
						The  ordering of the formulas in  the new leaf is\\ 
						$A_2,\ldots,A_q,T: f^i(t)\le x, T:x\le f^j(t), T: x:\eta,A_1$\\
						\item[(b3)] $A_1$ is  $T:\bigvee_{i\in\omega}(t\lt f^i(c)).$   Let $m$ be the least natural number $k$ such that  $T:t\lt f^k(c)$ does not occur in $\Delta^n.$   We extend  
						$\Gamma^n\vdash\Delta^n$ upwards as follows
						\[
						\Urule %
						{\Gamma^n\vdash T:t\lt f^m(c), \Delta^n}%
						{\Gamma^n\vdash  \Delta^n} %
						{} 
						\]
						
						The   ordering  of the formulas in the new leaf is\\  $A_2,\ldots,A_q, T:t\lt f^m(c),A_1.$
						
					\end{enumerate}
					
				\end{enumerate}

			\end{enumerate}
		\end{description}
		
		\begin{remark}
			\label{rem} By construction, $\TR=\bigcup_{n\in\omega}\TR_n$ is a tree, called the \textit{reduction tree} of $\Gamma\vdash\Delta.$ Notice that the presence of axioms with empty premiss and the fact that  each axiom different from \textit{id} is repeated infinitely many times in the enumeration imply that any open branch keeps on being extended upwards. Therefore  if   $\TR$ has no infinite branch
			there exists  some $n\in\omega$ such that  all the branches in the tree  $\TR_n$ are closed.   It follows immediately from the construction that   $\TR=\TR_n$ yields a proof $\Gamma\vdash\Delta.$   Since all applications of the cut  rule take place in case~\ref{extra-logical-expansion}, we actually get a proof of $\Gamma\vdash\Delta$ where all  cut-formulas occur in the righthand side of some axiom  different from \textit{id}.
			
			If there is some infinite branch in $\TR, $ in Section~\ref{countermodel} we show how to get a countermodel of $\Gamma\vdash\Delta$ from such an infinite branch.
		\end{remark}


		\subsection{Completeness}\label{countermodel}
		
		Let us assume that the reduction tree $\TR$ of the sequent $\Gamma\vdash\Delta$ constructed in Section~\ref{redtree}  has  an infinite branch
		$(\Gamma_n\vdash\Delta_n)_{n\in\omega}.$  Let ${\overline \Gamma}=\bigcup\Gamma_n$
		and ${\overline \Delta}=\bigcup\Delta_n.$  
		
		We define a structure $\mathcal M=(\mathbf  M, \mathbf  N_a,(p^{\mathbf  N_a})_{p\in\mathbf P},\sigma, \sigma_a)_{a\in M}$ by using $\overline\Gamma$ and $\overline\Delta.$
		
		\medskip

		First we define $\mathbf  M.$ We define a  relation $\sim$ on the set $T_e$ of external terms  by letting $$S\sim T\ \Leftrightarrow\  (S=T)\in{\overline
			\Gamma}.$$ 
		
		The tree construction and the first order axioms for equality imply that
		$\sim$ is an equivalence relation.  Let $T^\sim$ be the $\sim$-equivalence
		class of term $T.$ We let 
		$$M=\{T^\sim:\ T\in T_e\};\ 
		F^{\mathbf  M}(T^\sim)=F(T)^\sim;\ 
		C^{\mathbf  M}=C^\sim;\ 
		S^\sim<^{\mathbf  M}T^\sim\,\Leftrightarrow\,(S<T)\in{\overline \Gamma}.$$
		
		We let $\mathbf  M=(M, <^{\mathbf  M}, C^{\mathbf  M}, F^{\mathbf  M}).$ (Notice the small notational abuse.)
		The axioms for equality and the reduction tree construction imply that everything is well-defined.  Furthermore, the extralogical axioms and the reduction tree construction imply
		that $\mathbf  M$ satisfies the required properties. Namely,  $(M, <^{\mathbf  M}, C^{\mathbf  M})$ is a dense linear ordering with least element $C^{\mathbf  M}$ but no greatest element; the function $F^{\mathbf  M}$  is strictly increasing; for all $T^\sim\in M,$ $T^\sim<^{\mathbf  M} F(T)^\sim$ and the sequence $\{F^n(C)^\sim:  n\in\omega\}$ is cofinal in $M.$

		We define $\sigma: V_e\rightarrow M$ as follows:  $\sigma(X)=X^\sim.$ By induction on external  terms it follows that  the
		interpretation $T^{{\mathbf 
				M},\sigma}$ of term $T$ in $\mathbf  M$ under $\sigma$ is just $T^\sim,$ for
		each  $T\in T_e.$
		
		\medskip
		
		Next we  basically repeat   the construction of $\mathbf  M$ to get $\mathbf  N_a,$ for each $a\in M.$  For each $S\in T_e,$ we define an equivalence relation $\sim_S$ on $T_i$ as follows:
		$$t_1\sim_S t_2\ \Leftrightarrow\  (S: t_1=t_2)\in{\overline
			\Gamma}.$$  Let $t^S$ be the $\sim_S$-equivalence class of $t\in T_i.$ For each $S\in T_e,$ we let 
		$$
		N_S=\{t^S:\ t\in T_i\};\hspace{4ex} 
		f^{{\mathbf  N}_S}(t^S)=f(t)^S;\hspace{4ex}
		c^{{\mathbf  N}_S}=c^S;
		$$
		$$
		t_1^S<^{{\mathbf  N}_S}t_2^S\,\Leftrightarrow\, (S: t_1<t_2)\in{\overline \Gamma}.
		$$
		
		Moreover, for each $p\in\mathbf P,$ we let $p^{\mathbf  N_S}=\{ t^S:  (S:t:p)\in \overline\Gamma\}.$ (Recall that we denote by $p^{\mathbf  N_S}$  the set of points in $N_S$ where $p$ is true.)
		
		As above, the reduction tree construction implies that everything is well-defined and that, for all $S\in T_e,$ $\mathbf  N_S$ satisfies  the required properties.
		
		We define $\sigma_S: V_i\rightarrow N_S$ as follows:  $\sigma(x)=x^S.$ It follows easily  that, for each $t\in T_i,$  $t^{\mathbf  N_S, \sigma_S}= t^S.$

		\medskip
		
		The careful reader may have already noticed that the support of the external time flow
		is the $\{T^\sim: T\in T_e\},$ but we defined above a family  $\{\mathbf  N_S\}_{S\in T_e}$  (instead of an indexed family  on the $\sim$-equivalence classes of $T_e,$ as required by the definition of pre-structure).  We explain why: from the axioms and the  construction of the reduction tree it follows that  $\mathbf  N_S=\mathbf  N_T$  whenever $T^\sim = S^\sim.$ 
		For instance, let us assume that $T=S$ and $T: t=s$ are both in $\Gamma^n$ and that  $T=S, T: t=s \vdash S: t=s$ is the $n$-th element in the enumeration of the axioms (one such $n$ does exist because of the infinitely many repetitions). Then, extending upwards $\Gamma^n\vdash\Delta^n$ as prescribed by case 2 of the reduction tree construction, we get  that $\Gamma^{n+1}\vdash\Delta^{n+1}$ is $\Gamma^n, S: t=s\vdash \Delta^n.$  Hence $S:t=s$ is in $\overline\Gamma.$
		Similarly, assuming  $T^\sim=S^\sim,$ we get { $\sigma_S=\sigma_T$ and $p^{\mathbf  N_S}=p^{\mathbf  N_T},$ for all $p\in\mathbf P.$ }  
		
		Hence  we simply let $\mathbf  N_{S^\sim}=\mathbf  N_T$ for some (for all) $T\in T_e$ such that $S\sim T.$ Similar considerations apply to  $p^{\mathbf  N_{S^\sim}}$ and $\sigma_{S^\sim}.$ Thus we get the structure $$\mathcal M=(\mathbf  M,\mathbf  N_{S^\sim},(p^{\mathbf  N_{S^\sim}})_{p\in\mathbf P},\sigma, \sigma_{S^\sim})_{S\in T_e}.$$
		
		However, motivated by the above considerations (and for sake of notational simplicity), in the following we identify  $\mathcal M$ with $(\mathbf  M,\mathbf  N_S,(p^{\mathbf  N_S})_{p\in\mathbf P},\sigma, \sigma_S)_{S\in T_e}.$

		\medskip
		
		Next we show that $\mathcal M$ witnesses the unprovability of the sequent $\Gamma\vdash\Delta.$ To do that we prove a stronger result.
		
		\begin{theorem}\label{cmodel} Let $\mathcal M$ be the structure  defined above.  For every formula $A\in F_{e,r}\cup F_{e,l}$  it holds that
			\begin{enumerate}
				\item[\rm(1)] $A\in {\overline \Gamma} \Rightarrow \mathcal M\models A;$
				\item[\rm(2)] $A\in {\overline \Delta} \Rightarrow \mathcal M\not\models A.$
			\end{enumerate}
		\end{theorem}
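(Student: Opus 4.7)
The plan is to proceed by induction on the structural complexity of $A$, treating the two cases $A\in F_{e,r}$ and $A\in F_{e,l}$ in parallel. For $A=T:\beta$ with $\beta\in \overline F_{i,l}\cup F_{i,r}$, the induction is layered: I first induct on the external structure of $\beta$ and, when $\beta$ is atomic for the external logic (i.e.\ $\beta=t:\psi$ with $\psi\in F_{i,t}$, or $\beta\in F_{i,r}$), I use a sub-induction on $\psi$ or on $\beta$ respectively. The $L_{\omega_1\omega}$ disjunction does not obstruct the induction because the rank is a countable ordinal. Throughout, the key general fact about the branch $(\Gamma_n\vdash\Delta_n)_n$ is that the reordering conventions of cases~\ref{extra-logical-expansion} and~\ref{two} put the processed formula back at the end of the queue, so every formula appearing in some $\Gamma^n$ or $\Delta^n$ is treated infinitely often; combined with the hypothesis that each non-$\textit{id}$ axiom is repeated infinitely often in the enumeration, this gives the fairness needed.

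For the base case, when $A$ is atomic and $A\in\overline\Gamma$, truth in $\mathcal M$ is forced by the definitions: the relations $\sim$ and $\sim_S$, the interpretations of $<^{\mathbf M}, <^{\mathbf N_S}, F^{\mathbf M}, f^{\mathbf N_S}$, and the sets $p^{\mathbf N_S}$ were all defined precisely to make $\overline\Gamma$-membership coincide with satisfaction. When $A$ is atomic and $A\in\overline\Delta$, I must rule out $A\in\overline\Gamma$: otherwise the identity axiom $A\vdash A$, together with the application in case~\ref{extra-logical-expansion} or case~\ref{1one}, would close the branch, contradicting infiniteness.

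For the inductive step I examine, for each principal form of $A$, the reduction rule that was applied on the infinite branch and apply the induction hypothesis to the premise that lies on that branch. The representative hard case is $T:t:\Box_{[i,j]}\eta\in\overline\Gamma$ (case (a2)): the construction uses a fresh internal term $s$ each time, and by fairness every term of $T_i$ is used; for each such $s$, the branch goes through exactly one of the three premises, which by IH gives either $g^i_{T^\sim}(t^{T^\sim})\not\le s^{T^\sim}$, or $s^{T^\sim}\not\le g^j_{T^\sim}(t^{T^\sim})$, or $\mathbf N_{T^\sim}\models s:\eta$; since every element of $N_{T^\sim}$ is some $s^{T^\sim}$, the universal clause of the $\Box_{[i,j]}$ semantics holds. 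For $T:t:\Box_{[i,j]}\eta\in\overline\Delta$ (case (b2)), the fresh variable $x$ provides a concrete witness: by IH, $x^{T^\sim}$ sits in the required interval and $\mathbf N_{T^\sim}\not\models x:\eta$ under $\sigma_{T^\sim}(x/x^{T^\sim})$. The infinitary disjunction cases are analogous: on the left, every finite instance eventually appears; on the right, the reduction rule picks a fresh index $m$ to ensure progress without blocking termination, but fairness guarantees that at the limit one witness lands in $\overline\Gamma$ while all indices eventually get attempted. Propositional connectives, first-order quantifiers, and the external counterparts (with labels $T$ replaced by no label at the top level, or with $\overline F_{i,l}$-formulas) are handled symmetrically.

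The main obstacle is the fairness/bookkeeping argument: one must check carefully that the interaction of the formula-ordering convention, the axiom enumeration, and the choice of fresh terms/variables actually realizes every required witness along the single infinite branch we follow, and that in the $\Box$-on-the-left case the set $\{s^{T^\sim}:s\in T_i\}$ truly exhausts $N_{T^\sim}$. A secondary subtlety already discussed before the theorem is independence from representatives, i.e.\ that $\mathbf N_S$ depends only on $S^\sim$, which relies on the equality axioms $T=S,\ T:\eta\vdash S:\eta$ being applied through case~\ref{extra-logical-expansion}. Once fairness and well-definedness are nailed down, the remaining cases reduce to routine verifications that the semantic clauses match the reduction rules.
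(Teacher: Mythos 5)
Your proposal is correct and follows essentially the same route as the paper: a simultaneous induction on (1) and (2) stratified by formula class, with the atomic cases forced by the definition of $\mathcal M$, the $\Box$-on-the-left case argued via exhaustion of the term enumeration and exclusion of the two inequality premises, the $\Box$-on-the-right case witnessed by the fresh variable, and well-definedness on $\sim$-classes secured by the equality axioms fed in through case~2. The only blemish is that your description of the infinitary disjunction swaps what each side requires --- on the left the branch passes through just \emph{one} premise, and one disjunct in $\overline\Gamma$ suffices, while on the right the least-unused-index choice together with infinite re-processing puts \emph{every} instance into $\overline\Delta$ --- but this is a slip of exposition rather than of substance.
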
 
		
		\begin{proof} We prove simultaneously (1) and (2) by induction on each of the various classes of formulas.
			\begin{enumerate}
				\item $A\in F_{r,e}.$ The deduction rules governing $A$ are the rules for relational formulas (see Section ~\ref{cal}).  For those rules, the construction of the reduction tree proceeds as in the first order case. Therefore   (1) and (2) can be proved  in the same way as in  the construction of the reduction tree for first order logic. The infinitary formulas are taken care of by the  cases corresponding to (a3) and (b3) in the construction of the reduction tree for the  external  relational formulas. For instance,  let us assume that  $\bigvee_{n\in\omega} (T<F^n(C))$ occurs in $\overline\Delta.$ Notice that  such formula becomes infinitely many times  the first formula in the ordering of formulas in a sequent occurring in some infinite branch. Moreover, same as in case (b3) above,  the reduction tree is constructed  by systematically adding, {  for each $n\in\omega,$  the} formula $T<F^n(C)$  to $\overline \Delta.$ Therefore, by inductive assumption, $\mathcal M\not\models T<F^n(C),$ for all $n\in\omega.$ The conclusion follows.
				
				\item\label{due2} $A$ is of the form $T: \beta,$ for some $\beta\in F_{i,r}.$  We notice that the rules governing $A$ are  ``labelled'' versions of those involved in the previous case. Basically,  we proceed as in that case to get (1) and (2).
				
				\item $A$ is of the form $T: \beta,$ for some $\beta\in \overline F_{i,l}.$ 
				
				\begin{enumerate} 
					\item\label{aaa} First we simultaneously prove (1) and (2) for $\beta$ of the form $t: \alpha,$ for some $t\in T_i$ and $\alpha\in F_{i,t}.$ We proceed by induction on $\alpha.$
					
					Since $T^{\mathbf  M,\sigma}=T^\sim$ and $t^{\mathbf  N_T,\sigma_T}=t_T,$ the  cases when $\alpha$ is some propositional letter $p$ hold by definition of $p^{\mathbf  N_T}$.
					
					The propositional cases are straightforward by the construction of the reduction tree and by inductive assumption.
					
					\medskip 
					
					We are left with the case when $\alpha$ is of the form $\Box_{[i,j]}\eta$ (the cases relative to the other temporal operators are similar).
					
					Let us assume that $T:t:\alpha$ is in $\overline\Gamma$. Let $n\in\omega$ be such that $T:t:\alpha$ is the first  in the ordering of the formulas in $\Gamma_n\vdash\Delta_n.$ (Notice that there are infinitely many such $n$'s.)  With reference  to case  3(a2) of the reduction tree construction,  we notice that:
					
					\begin{enumerate}
						
						\item[--]  if $f^i(t)_T \le_T s_T, $ namely if $T:f^i(t)\le s$ is in $\overline \Gamma, $  then $\Gamma^{n+1}\vdash\Delta^{n+1}$ cannot be the leftmost leaf; 
						
						\item[--] if $s_T\le_T f^j(t)_T, $ then, similarly to the previous case,  we get that $\Gamma^{n+1}\vdash\Delta^{n+1}$ cannot be the middle leaf.
					\end{enumerate}
					
					Therefore, for all $s$ such that $f^i(t)_T\le_T s_T \le_T f^j(t)_T,$ it holds that $T:s:\eta$ is in $\overline\Gamma.$ Hence, by inductive assumption, $\mathcal M\models T:s:\eta.$

					Since,  in 3(a2), $s$ was chosen as the first term in the enumeration $\{t_i\}_{i\in\omega}$ which was not used so far,  for all $s\in T_i$ the formula $T:s:\eta$ is in $\overline\Gamma.$ Therefore   $\mathcal M\models T:t:\alpha.$
					
					\medskip
					
					Next, we assume that $T:t:\alpha$ is in $\overline\Delta.$  Let $n\in\omega$ be such that $T:t:\alpha$ is the first  in the ordering of the formulas in $\Gamma_n\vdash\Delta_n.$  With reference  to case  3(b2) of the reduction tree construction, we have that, for a suitably chosen variable $x$,    $T: f^i(t)\le x$ and  $T:x\le f^j(t)$ are in $\overline\Gamma$ and $T:x:\eta$ is in $\overline\Delta.$ By inductive assumption and by the semantics, we get  $$\mathcal M\not\models  T: x:\eta;\quad \mathcal M\models T: f^i(t) \le x;\quad \mathcal M\models T: x \le f^j(t).$$ Therefore $\mathcal M\not \models T:t:\Box_{[m,n]}\eta.$ 
					
					\item The propositional cases are straightforward. 
					
					\item Let $\beta$ be of the form $\Box_{[i,j]}\gamma,$ for some $\gamma\in\overline F_{i,l}.$  By the same argument used in \ref{aaa}  in the case relative to the the temporal operator, we get the required conclusion. 
				\end{enumerate}
			\end{enumerate}
		\end{proof}
		
		\begin{corollary}\label{completeness}(Completeness) Every valid sequent is provable in $\rm{MTL}_\infty^2.$
		\end{corollary}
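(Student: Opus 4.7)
The plan is to establish the contrapositive: any sequent $\Gamma\vdash\Delta$ that is not provable in $\rm{MTL}_\infty^2$ fails in some structure, hence is not valid. First I would apply the construction of Section~\ref{redtree} to $\Gamma\vdash\Delta$ to obtain the reduction tree $\TR=\bigcup_{n\in\omega}\TR_n$. By Remark~\ref{rem}, if $\TR$ were well founded (i.e., had no infinite branch) then, since each open branch keeps being extended and each non-\emph{id} axiom is encountered infinitely often, there would exist $n\in\omega$ such that every branch of $\TR_n$ is closed; the tree $\TR_n$ would then be a proof of $\Gamma\vdash\Delta$ in $\rm{MTL}_\infty^2$. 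By the unprovability assumption, $\TR$ must therefore contain an infinite branch $(\Gamma_n\vdash\Delta_n)_{n\in\omega}$.

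Next I would fix such an infinite branch and invoke the construction preceding Theorem~\ref{cmodel}, which produces a structure $\mathcal M=(\mathbf M,\mathbf N_S,(p^{\mathbf N_S})_{p\in\mathbf P},\sigma,\sigma_S)_{S\in T_e}$ out of $\overline\Gamma=\bigcup_n\Gamma_n$ and $\overline\Delta=\bigcup_n\Delta_n$. Theorem~\ref{cmodel} then yields, for every $A\in F_{e,r}\cup F_{e,l}$, that $A\in\overline\Gamma$ implies $\mathcal M\models A$ and $A\in\overline\Delta$ implies $\mathcal M\not\models A$. Because $\Gamma\subseteq\overline\Gamma$ and $\Delta\subseteq\overline\Delta$, the structure $\mathcal M$ satisfies every formula in $\Gamma$ and none of the formulas in $\Delta$. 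By the definition of truth of a sequent in Section~\ref{semantics}, this gives $\mathcal M\not\models\Gamma\vdash\Delta$, so $\Gamma\vdash\Delta$ is not valid.

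The bulk of the work is already carried by the reduction tree construction and by Theorem~\ref{cmodel}, so no serious new obstacle arises here: the corollary is a short contrapositive wrap-up. The only point worth being careful about is verifying the dichotomy underlying Remark~\ref{rem}, namely that either $\TR=\TR_n$ for some $n$ with all branches closed, or $\TR$ admits an infinite branch; this uses the fact that in the construction a branch is extended precisely when its leaf is not yet an axiom, and that the enumerations of terms and of non-\emph{id} axioms each repeat infinitely often, ensuring that no open branch ever gets stuck.
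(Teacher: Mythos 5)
Your proposal is correct and follows essentially the same route as the paper: the paper's proof is the direct form (valid $\Rightarrow$ by Theorem~\ref{cmodel} no infinite branch $\Rightarrow$ the reduction tree is a proof), while yours is the contrapositive, but both rest on exactly the same two ingredients, namely the dichotomy of Remark~\ref{rem} and the countermodel construction of Theorem~\ref{cmodel}.
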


		\begin{proof} If a sequent is valid then, by Theorem~\ref{cmodel}, its reduction tree has no infinite branch. Therefore the reduction tree yields a proof of the sequent. 
		\end{proof}

		In the following we extend to pre-structures the  isomorphism  relation $\simeq$ between first order structures. We still use $\simeq$ for the extended relation.
		
		We say that two pre-structures  $(\mathbf D, \mathbf E_d)_{d\in D}$ and $(\mathbf D^\prime , \mathbf E^\prime _d)_{d\in D^\prime}$ are isomorphic  if there exists a first-order isomorphism  $g: \mathbf D\rightarrow  \mathbf D^\prime$ such that, for all $d\in D,$ $\mathbf E_d \simeq \mathbf E^\prime_{g(d)}.$
		
		\medskip
		
		Let $\mathbb Q^+$ be the set of nonnegative rationals and let $h:\mathbb Q^+\rightarrow\mathbb Q^+$ be the function defined by $h(q)=q+1.$ Let 
		$\mathbf Q=(\mathbb Q^+, <, 0, h ).$ We  call \textit{canonical}  the pre-structure $\mathbf Q^2=(\mathbf Q, \mathbf Q_r)_{r\in\mathbb Q^+},$  where, for all $r\in\mathbb Q^+,$ $\mathbf Q_r=\mathbf Q.$   
		
		Next we recall  \cite[Proposition 3.3]{bm}, which is a variant of the back-and-forth technique by means of which  the $\aleph_0$-categoricity of the theory of linear orders without endpoints is established.  The content of the above mentioned proposition is the following:  let  $\mathbf  M=(M , < , 0, g: M\rightarrow M),$ where  $(M , < , 0)$ is a countable  dense linear ordering with least element 0 but no greatest element; $g$  is a strictly increasing function such  that, for all $a\in M,$ $a<g(a)$ and the sequence $\{g^n(0):  n\in\omega\}$ is cofinal in $M.$ Then $\mathbf M$ is isomorphic to $\mathbf Q.$ 
		
		We apply \cite[Proposition 3.3]{bm} to $\mathbf  M$ and $\mathbf  N_S$ defined as in Section~\ref{redtree} and   we get $\mathbf  M \simeq \mathbf Q\simeq \mathbf  N_S,$ for all $S\in T_e.$  It follows that  $(\mathbf  M, \mathbf  N_S)_{S\in T_e}\simeq (\mathbf Q,\mathbf Q_r)_{r\in\mathbb Q^+}.$
		
		By Remark~\ref{rem}, we finally get the following strong form of  a completeness theorem:  
		
		\begin{theorem} Let $\Gamma\vdash\Delta$ be a sequent. Then exactly one of the following holds:
			\begin{enumerate}
				\item[(1)]\label{provable} there exists a proof  of $\Gamma\vdash\Delta$  in $\rm{MTL}_{\infty}^2$ {  where} each  cut--formula occurs in the righthand side of some axiom different from \textit{id};
				\item[(2)] there exists a structure $\mathcal M$ having $\mathbf Q^2$ as underlying  pre-structure  such that $\mathcal M\not\models \Gamma\rightarrow\Delta.$
			\end{enumerate}
		\end{theorem}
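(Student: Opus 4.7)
The plan is to run the dichotomy through the reduction tree $\TR$ of $\Gamma\vdash\Delta$ constructed in Section~\ref{redtree}: either $\TR$ is well founded or it contains an infinite branch, and these two alternatives will deliver cases (1) and (2) respectively. First I would observe that the two cases are mutually exclusive: if (1) holds, then by soundness $\mathcal M\models\Gamma\vdash\Delta$ for every structure $\mathcal M,$ so (2) cannot hold. (Soundness was noted at the beginning of Section~\ref{compl}.)

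For the existence part, I would split on whether $\TR$ has an infinite branch. If it does not, Remark~\ref{rem} already provides exactly the kind of proof required in (1), because every application of \textit{cut} takes place in case~\ref{extra-logical-expansion} of the construction, where the cut formulas are precisely the formulas $D_1,\dots,D_m$ on the righthand side of an axiom different from \textit{id}. If $\TR$ does contain an infinite branch, I would invoke the whole construction preceding Theorem~\ref{cmodel} to build a structure $\mathcal M=(\mathbf M,\mathbf N_S,(p^{\mathbf N_S})_{p\in\mathbf P},\sigma,\sigma_S)_{S\in T_e}$ on a pre-structure $(\mathbf M,\mathbf N_S)_{S\in T_e}$, and then apply Theorem~\ref{cmodel} itself to conclude that $\mathcal M\not\models\Gamma\vdash\Delta.$

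The main thing to do (and the only nonroutine step) is to transfer this countermodel onto the canonical pre-structure $\mathbf Q^2.$ Here I would use the fact that $T_e$ and each $T_i$ are countable, so the first-order structure $\mathbf M$ and each $\mathbf N_S$ are countable dense linear orderings with least element, no greatest element, and a strictly increasing function whose iterates of the constant are cofinal. This is exactly the hypothesis of the variant of back-and-forth given as \cite[Proposition 3.3]{bm}, which yields isomorphisms $g\colon \mathbf M\to\mathbf Q$ and $h_S\colon\mathbf N_S\to\mathbf Q$ for every $S\in T_e.$ Packing these together produces an isomorphism of pre-structures $(\mathbf M,\mathbf N_S)_{S\in T_e}\simeq\mathbf Q^2$ in the sense of the extended $\simeq$ introduced just before the theorem.

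Finally I would transport the valuations along this isomorphism: set $\sigma'=g\circ\sigma,$ $\sigma'_{g(a)}=h_a\circ\sigma_a,$ and $p^{\mathbf Q_{g(a)}}=h_a(p^{\mathbf N_a})$ for each $p\in\mathbf P$ and $a\in M.$ By a straightforward induction on the structure of formulas in $F_{e,r}\cup F_{e,l}$ (tracking labels and temporal operators through the isomorphism, and using that $g$ and each $h_a$ preserve $<$, the successor-like function, and the constant), the resulting structure $\mathcal M'$ based on $\mathbf Q^2$ satisfies the same external labelled and relational formulas as $\mathcal M.$ In particular $\mathcal M'\not\models\Gamma\vdash\Delta,$ which gives case (2). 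The only place that requires care is this transfer lemma, since the temporal clauses refer to the function $g$ and the ordering $<_a$ on each internal fibre, and one must check that these are respected fibrewise by the isomorphism; but this is immediate from the definition of isomorphism between pre-structures.
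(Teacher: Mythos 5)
Your proposal is correct and follows essentially the same route as the paper: the dichotomy on the reduction tree, Remark~\ref{rem} for the restricted-cut proof in case (1), Theorem~\ref{cmodel} for the countermodel, and \cite[Proposition 3.3]{bm} to replace the underlying pre-structure by $\mathbf Q^2$. The only difference is that you spell out the transport of the assignments and of the sets $p^{\mathbf N_a}$ along the isomorphism, a step the paper leaves implicit.
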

		In particular,  $\mbox{MTL}^2_\infty$ is complete with respect to the family of structures whose underlying pre-structure is the canonical one.

		\section{Final remarks}
		
		An  application to be pursued  in a future work is the mechanization of the ${\rm MTL}_\infty^2$ sequent calculus.  Mechanization  is a relevant issue within the class of temporal logics, in light of its strong connections with the area  of formal verification.
		There are two main approaches to mechanization: one based on model checking; the other based on proof coding  in a so called logical  framework or proof assistant (CoQ or Isabelle, for instance).
		By the nature of  the ${\rm MTL}_\infty^2$ sequent calculus, in which the deduction rules relative to each operator are independent of the others, and by the validity of a suitable form of cut-elimination, the proof coding approach seems to be the most promising.
		
		Regarding the theoretical issues, we stress that ours is a  study  of  the meaning of temporal operators in a two-dimensional time structure. The  ${\rm MTL}_\infty^2$  rules do obey  the structural proof theory paradigm that  deduction rules must syntactically  provide the ``meaning'' of each operator. It is  worth noticing that the ${\rm MTL}_\infty^2$ rules show great similarity with those for bounded  first order quantifiers. 
		
		Moreover, our  approach does not require the temporalized and the temporalizing system being the same. In particular, the underlying  time flows might have different structures or the  systems might be endowed with different temporal operators. What is crucial   is that both system do  have a reasonably good sequent calculus. In this regard, we may investigate  an intuitionistic version of ${\rm MTL}_\infty^2$ so to address issues like the the computational content of proofs; the Brower-Heyting-Kolmogorov interpretation of temporal operators; the existence of a lambda-calculus having the ${\rm MTL}_\infty^2$ formulas as types.

		Notice  that we have obtained a completeness theorem by using a syntax-driven   technique. In our opinion, a Hilbert style  formulation of ${\rm MTL}_\infty^2$  would hardly allow to achieve completeness.

		It is worth noticing that application of our  temporalization technique can be iterated in order  to obtain an $n$-dimensional system, for each $n>2.$  All the constructions and the  results obtained in this paper generalize to $n$-dimensional systems.

		We point out that  our  technique does not allow to express properties of the interaction between the two dimensions. This requires  the introduction of   new types of formulas, the definition of their semantics and an extension of the deductive system. In this way the resulting system would be closer to a full combination of logics, rather than to the temporalization of one logic by means of another. As pointed out in  \cite{fg2}, a full combination may  affect completeness or  the cut-elimination property. It may worth investigating to what extent it is possible to increase the expressive power of the resulting system while  retaining the properties of the logics to be combined. 
		
		It may also worth investigating the decidability of ${\rm MTL}_\infty^2.$  One first step would be to  establish  the decidability of the one-dimensional logic ${\rm MTL}_\infty.$ We notice that the  ${\rm MTL}_\infty$ axioms for the relational component do extend the axioms of a first order theory of dense linear orderings. The decidability of the latter theory    can be quite easily achieved by means of model-theoretic techniques.  Decidability is likely to be inherited by our extension. 
		
		In this regard,  we recall the decidability result for Metric Interval Temporal Logic that has been obtained in \cite{mitl} by reducing the satisfiability problem for that logic to a decidable problem for timed automata. The latter result refers to a subclass of our class of structures (the so called \textit{timed state sequences}), but it is possible that a similar reduction technique  applies to our setting.  
		
		Still  it is not clear  whether it is possible to combine two such completely different  decision procedures in order to establish decidability of  ${\rm MTL}_\infty,$ also in consideration of the interaction between the relational and the temporal component  of  ${\rm MTL}_\infty$  due to the presence of labels. 
		
		In conclusion, even proving decidability of the one-dimensional  ${\rm MTL}_\infty$ seems to be nontrivial. We leave it as an open problem.

	\end{document}

\end{document}